\theoremstyle{plain}
\newtheorem{thm}{Theorem}[section]
\newtheorem{lem}[thm]{Lemma}
\newtheorem{prop}[thm]{Proposition}
\theoremstyle{definition}
\theoremstyle{remark}
\newtheorem{rmk}[thm]{Remark}
\def\cO{\mathcal{O}}
\def\fra{\mathfrak{a}}
\def\frb{\mathfrak{b}}
\def\.{\cdot}
\def\^{\widehat}
\def\({\left(}
\def\){\right)}
\renewcommand{\and}{ \ \ \text{ and } \ \ }
\begin{document}

\begin{abstract}
This note points out a gap in the proof of one of the technical results in the paper \emph{Asymptotic Invariants of Base Loci}, that appeared 
in \emph{Ann. Inst. Fourier (Grenoble)} 56 (2006), 1701--1734. We provide a correct proof of this result.
\end{abstract}

\title{ Erratum to the paper: Asymptotic Invariants of Base Loci}

\author[L.~Ein]{Lawrence Ein}

\address{Department of Mathematics, University of Illinois at Chicago, 851 South Morgan St., Chicago, IL 60607, USA}

\email{ein@uic.edu}

\author[R.~Lazarsfeld]{Robert Lazarsfeld}

\address{Department of Mathematics, Stony Brook University, Stony Brook, NY 11794, USA}

\email{robert.lazarsfeld@stonybrook.edu}

\author[M.~Musta\c{t}\u{a}]{Mircea Musta\c{t}\u{a}}

\address{Department of Mathematics, University of Michigan, 530 Church Street, Ann Arbor, MI 48109, USA}

\email{mmustata@umich.edu}

\author[M.~Nakamaye]{Michael Nakamaye}

\address{3900 Moon St. NE, Albuquerque, NM 87111, USA}

\email{nakamaye@gmail.com}

\author[M.~Popa]{Mihnea~Popa}
\address{Department of Mathematics, Harvard University, 
1 Oxford Street, Cambridge, MA 02138, USA} 
\email{mpopa@math.harvard.edu}

\maketitle

\section{The setup}

We work over an algebraically closed field $k$ and let $X$ be a variety over $k$ (that is, a scheme of finite type over $k$ that is irreducible and reduced). 
Let $N$ be a finitely generated, free abelian group and
$S\subseteq N$ a finitely generated, saturated subsemigroup. We denote by $C$ the cone generated by $S$ in $N_{\mathbf R}=N\otimes_{\mathbf Z}{\mathbf R}$,
so $C$ is a rational polyhedral convex cone and $S=C\cap N$. For standard facts of convex geometry, we refer to \cite{Ewald} and \cite{Ziegler}. 

An \emph{$S$-graded system of ideals} on $X$ is a family $\fra_{\bullet}=(\fra_m)_{m\in S}$ 
of coherent ideals $\fra_m\subseteq\cO_X$ for $m\in S$ such that $\fra_0=\cO_X$ and $\fra_{m}\cdot\fra_{m'}\subseteq\fra_{m+m'}$ for every $m,m'\in S$.
The \emph{Rees algebra} of $\fra_{\bullet}$ is the quasi-coherent sheaf of $S$-graded $\cO_X$-algebras
$$R(\fra_{\bullet})=\bigoplus_{m\in S}\fra_m.$$
We say that $\fra_{\bullet}$ is \emph{finitely generated} if $R(\fra_{\bullet})$ is a finitely generated $\cO_X$-algebra. 
For a coherent ideal $\frb$ on $X$, we denote by $\overline{\frb}$ the integral closure of $\frb$ (see \cite{HS} for the definition
and basic properties of integral closure of ideals).

The following result is Proposition~4.7 in \cite{ELMNP}. 

\begin{prop}\label{main_result}
If $\fra_{\bullet}$ is a finitely generated $S$-graded system of ideals on the variety $X$, then there is a smooth fan $\Delta$ with support $C$
such that for every smooth fan $\Delta'$ refining $\Delta$, there is a positive integer $d$ with the following property: if $\sigma$ is a cone in $\Delta'$ and
$e_1,\ldots,e_s$ are the generators of
$S_{\sigma}=\sigma\cap N$, then
\begin{equation}\label{eq_main_result}
\overline{\fra_{d\sum_ip_ie_i}}=\overline{\prod_i\fra_{de_i}^{p_i}}\quad\text{for all}\quad p_1,\ldots,p_s\in {\mathbf Z}_{\geq 0}.
\end{equation}
\end{prop}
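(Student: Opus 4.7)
The plan is to translate \eqref{eq_main_result} into the statement that a finite list of valuation functions associated with $\fra_\bullet$ becomes $\Z$-linear on every cone of $\Delta'$ after multiplying $m$ by a sufficiently divisible integer $d$. Since $X$ is excellent, the integral closure of $R(\fra_\bullet)$ is finite over it, and the standard theory of graded normalizations identifies it with $\bigoplus_{m\in S}\overline{\fra_m}$. Both sides of \eqref{eq_main_result} are unchanged if we replace $\fra_\bullet$ by $\overline{\fra}_\bullet=(\overline{\fra_m})_m$, so we may assume that $R(\fra_\bullet)$ is normal and that each $\fra_m$ is integrally closed.

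Finite generation together with normality of $R(\fra_\bullet)$ then provides a finite set of divisorial valuations $v_1,\dots,v_N$ on the function field of $X$---the Rees valuations of the graded system---such that for every ideal $\fb$ of the form $\fra_m$ or $\prod_j\fra_{m_j}^{a_j}$ one has $\overline{\fb}=\{f\in\cO_X:v_i(f)\ge v_i(\fb)\text{ for all }i\}$. Setting $\phi_i(m):=v_i(\fra_m)$, the functions $\phi_i\colon S\to\Z_{\ge 0}$ are subadditive, and finite generation of the normalized Rees algebra forces each $\phi_i$ to extend to a convex, positively homogeneous, rational piecewise linear function $\widehat\phi_i$ on $C$. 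Moreover, there is a positive integer $d_0$ such that $\phi_i(d_0 m)=d_0\widehat\phi_i(m)\in\Z$ for all $i$ and all $m\in S$.

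The finite collection of fans on which the individual $\widehat\phi_i$ are linear admits a common rational polyhedral refinement $\Sigma$ with support $C$; let $\Delta$ be a smooth refinement of $\Sigma$, whose existence is standard in toric geometry. For any smooth refinement $\Delta'$ of $\Delta$, each $\widehat\phi_i$ is linear on every cone of $\Delta'$. Choose $d$ to be a sufficiently divisible multiple of $d_0$; then for any $\sigma\in\Delta'$ with generators $e_1,\dots,e_s$ of $S_\sigma$ and any $p_j\in\Z_{\ge 0}$,
\begin{equation*}
\phi_i\Bigl(d\sum_j p_j e_j\Bigr)=\sum_j p_j\,\phi_i(d e_j)=v_i\Bigl(\prod_j\fra_{de_j}^{p_j}\Bigr)\qquad\text{for each } i.
\end{equation*}
Since $\prod_j\fra_{de_j}^{p_j}\subseteq\fra_{d\sum_j p_j e_j}$ and the two ideals have the same $v_i$-value for every $i$, the valuative characterization of integral closure from the previous paragraph yields \eqref{eq_main_result}.

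The crucial technical point is the piecewise $\Q$-linearity of the $\phi_i$ and the existence of a common denominator $d_0$; this is the multigraded analogue of the classical $1$-parameter fact that some Veronese of a finitely generated normal $\Z_{\ge 0}$-graded Rees algebra is generated in degree one. Its proof calls for a relative toric model of $\Spec_X R(\fra_\bullet)$ whose exceptional divisors simultaneously realize the $v_i$ and whose combinatorial structure is governed by a rational subdivision of $C$.
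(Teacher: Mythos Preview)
Your overall architecture is right: reduce \eqref{eq_main_result} to the statement that certain valuation functions are linear on each cone of a common fan, then invoke the valuative criterion for integral closure. But the heart of the argument---the piecewise $\Q$-linearity of the $\phi_i$ on a common rational fan, together with a uniform denominator---is precisely what you leave unproved, and you acknowledge as much in your final paragraph. Everything before that paragraph is reduction; the ``relative toric model'' you allude to would be the actual content, and it is not supplied. In addition, the existence of a single finite set of divisorial valuations computing the integral closure of \emph{every} $\fra_m$ and every monomial $\prod_j\fra_{m_j}^{a_j}$ is not a standard fact for multigraded systems and would itself require justification (for a single ideal this is the theory of Rees valuations, but here the Rees valuations of the various $\fra_m$ could a priori differ).

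The paper's proof avoids both of these detours. It does not normalize and does not reduce to finitely many valuations. Instead it works with the \emph{asymptotic} invariant $v^{\fra_\bullet}(m)=\inf_\ell v(\fra_{\ell m})/\ell$ for an arbitrary discrete valuation $v$, and shows directly from finite generation that
\[
v^{\fra_\bullet}(m)\;=\;\inf\Big\{\textstyle\sum_i\lambda_i\,v(\fra_{m_i})\ :\ \lambda_i\in\Q_{\ge 0},\ \sum_i\lambda_i m_i=m\Big\},
\]
where $m_1,\dots,m_r$ are the degrees of a generating set. The crucial input is then a purely convex-geometric lemma (Lemma~\ref{main_lemma}): for fixed $m_1,\dots,m_r$ there is a \emph{single} fan $\Delta$ on $C$ such that the function above is linear on every cone of $\Delta$ for \emph{every} choice of weights $\alpha_i=v(\fra_{m_i})\ge 0$. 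This is proved via linear-programming duality (the normal fan of the polyhedron $\{\gamma:\langle m_i,\gamma\rangle\le\alpha_i\}$ has rays contained in $\{\R_{\ge 0}m_i\}$, hence only finitely many such fans occur). The integer $d$ is then the lcm of the $d_{e}$ over the primitive ray generators $e$ of $\Delta'$, where $d_e$ is chosen so that $\fra_{d_e\ell e}=\fra_{d_e e}^\ell$. Because the fan is uniform in $v$, no finiteness of valuations is needed, and the piecewise linearity you were missing comes for free from the explicit infimum formula plus Lemma~\ref{main_lemma}. Your normalization step and finite Rees-valuation step are thus unnecessary, and the real work you deferred is exactly what Lemma~\ref{main_lemma} accomplishes.
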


The argument in \cite{ELMNP} proceeds by induction on the dimension of $C$. A key claim is that one can choose a smooth fan $\Delta$ with support $C$
such that the degrees corresponding to a finite system of generators of $R(\fra_{\bullet})$ lie on the rays of $\Delta$ and such that the equality (\ref{eq_main_result})
holds on each cone of $\Delta$ of dimension $\dim(C)-1$. However, it is not clear that this can be achieved when $\dim(C)\geq 3$: given any fan $\Delta$
with support $C$, we can apply the inductive hypothesis to get suitable refinements for the cones in $\Delta$ of dimension $\dim(C)-1$, but we then need to further refine $\Delta$, leading to new cones
of dimension $\dim(C)-1$. It is not clear that this process terminates.

\section{The corrected proof}

In what follows we provide a different proof of Proposition~\ref{main_result}. The key ingredient is the following general lemma. While the statement is familiar to the 
experts in convex geometry, we provide a proof since we could not find a reference in the literature.

\begin{lem}\label{main_lemma}
Let $N$ be a finitely generated, free abelian group and $C$ the convex cone in $N_{\mathbf R}$ generated by
$v_1,\ldots,v_r\in N_{\mathbf Q}=N\otimes_{\mathbf Z}{\mathbf Q}$. Given $\alpha=(\alpha_1,\ldots,\alpha_r)\in {\mathbf R}_{\geq 0}^r$, we consider the function
$\varphi_{\alpha}\colon C\cap N_{\mathbf Q}\to {\mathbf R}_{\geq 0}$ given by
\begin{equation}\label{eq_phi}
\varphi_{\alpha}(v)=\inf\big\{\lambda_1\alpha_1+\ldots+\lambda_r\alpha_r\mid \lambda_1,\ldots,\lambda_r\in {\mathbf Q}_{\geq 0}, \lambda_1v_1+\ldots+\lambda_rv_r=v\big\}.
\end{equation}
For every $\alpha$, the infimum in (\ref{eq_phi}) is a minimum and $\varphi_{\alpha}$ is a convex, piecewise linear function. Moreover, there is a fan $\Delta$,
with support $C$, such that each $\varphi_{\alpha}$ is linear on every cone of $\Delta$.
\end{lem}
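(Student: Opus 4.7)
I would prove the three assertions in order, reducing the first two to standard linear-programming facts and handling the last via an $\alpha$-independent combinatorial fan combined with a convex/concave rigidity argument. The universal fan I would take is $\Delta =$ the common refinement of the finitely many simplicial subcones $\sigma_I := \mathrm{cone}(v_i : i \in I)$, as $I$ ranges over subsets of $\{1,\ldots,r\}$ such that $\{v_i\}_{i \in I}$ is linearly independent. Linearity of each $\varphi_\alpha$ on each cone of $\Delta$ would then follow by observing that on such a cone $\varphi_\alpha$ is simultaneously convex (from its definition) and a finite minimum of linear functions (hence concave).

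\textbf{Steps 1 and 2 (LP facts).} For fixed $v \in C \cap N_\Q$, the feasible set $\{\lambda \in \R_{\geq 0}^r : \sum_i \lambda_i v_i = v\}$ is a nonempty rational polyhedron, and the objective $\langle \alpha, \lambda \rangle$ is bounded below by $0$ because $\alpha \geq 0$. Hence the LP minimum is attained at a basic feasible solution, which is rational since the defining data is rational; so the infimum in (\ref{eq_phi}) is a minimum taken in $\Q_{\geq 0}^r$. Directly from the definition, $\varphi_\alpha$ is subadditive and positively homogeneous, hence sublinear and in particular convex.

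\textbf{Step 3 (universal fan and linearity).} By Carath\'eodory, every $v \in C$ is a nonnegative combination of a linearly independent subset of the $v_i$'s, so $\bigcup_I \sigma_I = C$ and the common refinement $\Delta$ of the rational polyhedral cones $\{\sigma_I\}$ is a fan with support $C$. Its defining property is: for $\tau \in \Delta$ and $v$ in the relative interior of $\tau$, one has $v \in \sigma_I$ if and only if $\tau \subseteq \sigma_I$. Now fix $\alpha$ and $\tau \in \Delta$. For each basis $I$ with $\tau \subseteq \sigma_I$, the unique expansion $v = \sum_{i \in I} \lambda_i^I(v) v_i$ defines a linear function $f_{I,\alpha}(v) := \sum_{i \in I} \alpha_i \lambda_i^I(v)$ on $\tau$. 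By Step 1 together with the defining property of $\Delta$, for $v$ in the relative interior of $\tau$ one has $\varphi_\alpha(v) = \min_I f_{I,\alpha}(v)$, the minimum ranging over bases $I$ with $\tau \subseteq \sigma_I$. This finite minimum of linear functions is concave; together with the convexity of $\varphi_\alpha$, it forces $\varphi_\alpha$ to be affine on the relative interior of $\tau$, and then linear on $\tau$ by positive homogeneity. Since $\Delta$ is a fan with finitely many cones, this also yields the piecewise linearity of $\varphi_\alpha$ globally.

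\textbf{Main obstacle.} The main subtlety is that the fan on which $\varphi_\alpha$ is \emph{strictly} linear genuinely depends on $\alpha$: the walls along which different bases switch being optimal are loci $\{f_{I,\alpha} = f_{I',\alpha}\}$ that move with $\alpha$, so there is no hope of reading off a universal $\Delta$ by tracking these walls directly. The convex-meets-concave rigidity is the trick that lets one work with the coarser, purely combinatorial fan $\Delta$ built from the $v_i$'s alone.
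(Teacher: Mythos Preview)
Your proposal is correct and coincides almost verbatim with the \emph{second}, elementary argument the paper records in the Remark immediately following the main proof: the fan built as a common refinement of the simplicial cones $\sigma_I=\mathrm{cone}(v_i:i\in I)$ over linearly independent subsets $I$, the Carath\'eodory reduction to such $I$, the formula $\varphi_\alpha=\min_I f_{I,\alpha}$ on the relative interior of each $\tau\in\Delta$, and the convex-meets-concave rigidity forcing linearity.

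The paper's \emph{primary} proof takes a genuinely different route. It passes via LP duality to the formula $\widetilde\varphi_\alpha(v)=\max_{\gamma\in Q(\alpha)}\langle v,\gamma\rangle$, where $Q(\alpha)=\{\gamma:\langle v_i,\gamma\rangle\le\alpha_i\text{ for all }i\}$, and then observes that $\widetilde\varphi_\alpha$ is tautologically linear on the cones of the normal fan $\Delta(\alpha)$ of $Q(\alpha)$. The key point is that the rays of $\Delta(\alpha)$ are always among $\{\mathbf R_{\ge0}v_i\}$, so only finitely many fans occur as $\alpha$ varies, and any common refinement works. What this buys is a cleaner identification of the linearity domains (they are normal cones to an explicit polyhedron), at the cost of invoking LP duality and a continuity argument to pass from $\alpha\in\mathbf R_{>0}^r$ to the boundary. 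Your approach avoids duality entirely and gives an explicit combinatorial $\Delta$ depending only on the linear matroid of the $v_i$; the price is the convex/concave trick, which is less constructive about which $f_{I,\alpha}$ actually realizes $\varphi_\alpha$ on a given cone.

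One small point worth tightening: you conclude ``linear on $\tau$'' directly from ``affine on the relative interior plus positive homogeneity,'' but positive homogeneity only upgrades affine to linear on $\mathrm{Relint}(\tau)$; extending the identity $\varphi_\alpha=\ell$ to the boundary of $\tau$ needs one more line. The paper's Remark does this by using convexity for $\ell\le\varphi_\alpha$ on $\tau$ and lower semicontinuity of $\varphi_\alpha$ (immediate from the definition) for the reverse inequality.
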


Before giving the proof of the lemma, we recall one well-known fact. For $u=(u_1,\ldots,u_n),v=(v_1,\ldots,v_n)\in {\mathbf R}^n$, we put
$\langle u,v\rangle=\sum_{i=1}^nu_iv_i$. We use the same notation for the corresponding pairing of vectors in ${\mathbf R}^r$.

\begin{rmk}\label{rmk1}
Recall that a (rational) \emph{polyhedron} in ${\mathbf R}^n$ is a subset defined by finitely many affine linear inequalities (defined over ${\mathbf Q}$).
A (rational) \emph{polytope} in ${\mathbf R}^n$ is a bounded (rational) polyhedron, or equivalently, the convex hull of finitely many points (in ${\mathbf Q}^n$); 
see \cite[Theorem~1.1]{Ziegler}. Any polyhedron $P$ in ${\mathbf R}^n$ can be written as $P_0+C$, where $P_0$ is a polytope and $C$ is a polyhedral convex cone
(see \cite[Theorem~1.2]{Ziegler}); moreover, if $P$ is rational, then $P_0$ and $C$ can be taken rational as well. Suppose now that $\ell$ is a linear function on
${\mathbf R}^n$ given by $\ell(v)=\langle u,v\rangle$ for some $u\in {\mathbf R}^n$. It is clear that $\ell$ is bounded below on $P$ if and only if $\ell\geq 0$ on $C$,
in which case we have
$$\inf_{v\in P}\ell(v)=\min_{v\in P}\ell(v)=\min_{v\in P_0}\ell(v)=\min\big\{\ell(w_1),\ldots,\ell(w_s)\big\},$$
where $w_1,\ldots,w_s$ are the vertices (that is, the $0$-dimensional faces) of $P_0$. Note that if $P_0$ is a rational polytope, then $w_i\in {\mathbf Q}^n$ for all $i$,
hence if $P$ is a rational polyhedron, we have
$$\min_{v\in P}\ell(v)=\min_{v\in P\cap {\mathbf Q}^n}\ell(v).$$
\end{rmk}

\begin{proof}[Proof of Lemma~\ref{main_lemma}]
Let us choose an isomorphism
$N\simeq {\mathbf Z}^n$ that allows us to identify $N_{\mathbf Q}$ and $N_{\mathbf R}$ with ${\mathbf Q}^n$ and ${\mathbf R}^n$, respectively. We can thus
write $v_i=(v_{i,1},\ldots,v_{i,n})$ for $1\leq i\leq r$, with $v_{i,j}\in {\mathbf Q}$ for all $i$ and $j$. 
For $\alpha\in {\mathbf R}_{\geq 0}^r$, 
let us denote by $\widetilde{\varphi}_{\alpha}$ the map $C\to {\mathbf R}$ given by 
\begin{equation}\label{eq_phi2}
\widetilde{\varphi}_{\alpha}(v)=\inf\big\{\langle \alpha,\lambda\rangle \mid \lambda=(\lambda_1,\ldots,\lambda_r)\in {\mathbf R}^r_{\geq 0}, \lambda_1v_1+\ldots+\lambda_rv_r=v\big\}.
\end{equation}

If $v=(b_1,\ldots,b_n)\in {\mathbf R}^n$ and $\lambda=(\lambda_1,\ldots,\lambda_r)\in {\mathbf R}^r$, the conditions
$\lambda_1,\ldots,\lambda_r\geq 0$ and  $v=\sum_{i=1}^r\lambda_iv_i$ are equivalent to 
$\lambda\in P(v)$, where $P(v)$ is the polyhedron in ${\mathbf R}^r$ given by
$$\sum_{i=1}^rv_{i,j}\lambda_i=b_j\,\,\text{for}\,\,1\leq j\leq n\quad\text{and}\quad \lambda_i\geq 0\,\,\text{for}\,\,1\leq i\leq r.$$
For every $\alpha=(\alpha_1,\ldots,\alpha_r)\in {\mathbf R}^r_{\geq 0}$, we have $\langle \alpha,\lambda\rangle\geq 0$ for all $\lambda\in P(v)$. 
We thus conclude using Remark~\ref{rmk1} that $\varphi_{\alpha}(v)=\widetilde{\varphi}_{\alpha}(v)$ for every $v\in C\cap {\mathbf Q}^n$ and the infimum in the definition of
$\varphi_{\alpha}(v)$ is a minimum.

The fact that each $\widetilde{\varphi}_{\alpha}$ is a convex function follows easily from the definition. Indeed, since 
we clearly have $\widetilde{\varphi}_{\alpha}(tv)=t\cdot \widetilde{\varphi}_{\alpha}(v)$ for all $v\in C$ and $t\geq 0$, convexity is equivalent to the fact that
$$\widetilde{\varphi}_{\alpha}(v+v')\leq \widetilde{\varphi}_{\alpha}(v)+\widetilde{\varphi}_{\alpha}(v')\quad\text{for all}\quad v,v'\in C.$$
This follows from the fact that if  $v=\sum_{i=1}^r\lambda_iv_i$ and $v'=\sum_{i=1}^r\lambda'_iv_i$, with $\lambda_i,\lambda'_i\in {\mathbf R}_{\geq 0}$ for all $i$,
are such that $\sum_{i=1}^r\lambda_i\alpha_i=\widetilde{\varphi}_{\alpha}(v)$ and $\sum_{i=1}^r\lambda'_i\alpha_i=\widetilde{\varphi}_{\alpha}(v')$,
then $v+v'=\sum_{i=1}^r(\lambda_i+\lambda'_i)v_i$, hence
$$\widetilde{\varphi}_{\alpha}(v+v')\leq \sum_{i=1}^r\lambda_i\alpha_i+\sum_{i=1}^r\lambda'_i\alpha_i=\widetilde{\varphi}_{\alpha}(v)+\widetilde{\varphi}_{\alpha}(v').$$

It is a consequence of the Duality Theorem in Linear Programming (see \cite[Theorem~IV.8.2]{Barvinok}) that for every $v=(b_1,\ldots,b_n)\in C$, we have
$$\widetilde{\varphi}_{\alpha}(v)=\max_{\gamma\in Q(\alpha)}\langle v,\gamma\rangle,$$
where $Q(\alpha)$ is the polyhedron in ${\mathbf R}^n$ consisting of those $\gamma=(\gamma_1,\ldots,\gamma_n)$ with
$\sum_{j=1}^nv_{i,j}\gamma_j\leq \alpha_i$ for $1\leq i\leq r$. 

In order to complete the proof of the lemma, it is enough to show that there is a fan $\Delta$ (consisting of strongly convex, rational polyhedral convex cones) with support
$C$, such that $\widetilde{\varphi}_{\alpha}$ is a linear function on every cone of $\Delta$ for all $\alpha\in {\mathbf R}_{\geq 0}^n$. Note that if $v\in C$ is fixed
and we write $P(v)$ as $P_0(v)+C_0(v)$, for a polytope $P_0(v)$ and a polyhedral convex cone $C_0(v)$, then it follows from Remark~\ref{rmk1} that
$$\widetilde{\varphi}_{\alpha}(v)=\min\big\{\langle \alpha,w_1\rangle,\ldots,\langle \alpha,w_s\rangle\big\},$$
where $w_1,\ldots,w_s$ are the vertices of $P_0(v)$. We thus conclude that the function $\alpha\mapsto\widetilde{\varphi}_{\alpha}(v)$ is continuous
on ${\mathbf R}_{\geq 0}^r$. In particular, it is enough to find a fan $\Delta$ as above such that $\widetilde{\varphi}_{\alpha}$ is linear on the cones
of $\Delta$ for all $\alpha\in {\mathbf R}_{>0}^r$. 

Note now that if $\alpha\in {\mathbf R}_{>0}^r$, then $0$ lies in the interior of $Q(\alpha)$. We consider the normal fan\footnote{We consider the version of the normal fan whose rays are the outer normals to the facets of the polyhedron.} $\Delta(\alpha)$ to $Q(\alpha)$
(see \cite[Example~7.3]{Ziegler}). Its cones are of the form 
$$\sigma_F=\big\{w\in {\mathbf R}^n\mid \langle w,u'\rangle \geq \langle w,u\rangle\,\,\text{for all}\,\,u\in Q(\alpha),u'\in F\big\},$$
where $F$ runs over the faces of $Q(\alpha)$. It is clear that $\widetilde{\varphi}_{\alpha}$ is linear on each cone of $\Delta(\alpha)$: on $\sigma_F$ it is given by
$\langle -,u'\rangle$ for every $u'\in F$.

The support of $\Delta(\alpha)$ consists precisely of those $w\in {\mathbf R}^n$
such that the function $\langle w,-\rangle$ is bounded above on $Q(\alpha)$; equivalently, if we write $Q(\alpha)=Q_0(\alpha)+T(\alpha)$, where $Q_0(\alpha)$ is a polytope
and $T(\alpha)$ is a polyhedral convex cone, then $-w$ lies in the dual $T(\alpha)^{\vee}$ of $T(\alpha)$. Note that by definition of $Q(\alpha)$, the cone $T(\alpha)$ is defined by
$\sum_{j=1}^nv_{i,j}\gamma_j\leq 0$ for $1\leq i\leq r$ (see \cite[Proposition~1.12]{Ziegler}), hence it is the dual of $-C$. We thus conclude that the support of $\Delta(\alpha)$ is $C$. 

Note also that every facet of $Q(\alpha)$ is of the form 
$$Q(\alpha)\cap \big\{\gamma\mid \langle v_i,\gamma\rangle=\alpha_i\big\}$$
for some (nonzero) $v_i$, hence the corresponding ray of $\Delta(\alpha)$ is ${\mathbf R}_{\geq 0}v_i$. We deduce that when we vary $\alpha$,
the rays of $\Delta(\alpha)$ belong to a finite set, hence we have finitely many such fans. If we let $\Delta$ be any common refinement of all such $\Delta(\alpha)$,
we conclude that the support of $\Delta$ is $C$ and 
$\widetilde{\varphi}_{\alpha}$ is linear on the cones of $\Delta$ for all $\alpha\in {\mathbf R}_{>0}^r$. This completes the proof of the lemma.
\end{proof}

\begin{rmk}
We now explain a more elementary argument for the existence of the fan $\Delta$ in Lemma~\ref{main_lemma}. This avoids the use of the Duality Theorem in Linear Programming and
also makes the choice of fan $\Delta$ more explicit.
First, arguing as in the proof of  Carath\'{e}odory's theorem (see \cite[Proposition~1.15]{Ziegler}), we show the following

\noindent {\bf Claim}: in the definition of $\widetilde{\varphi}_{\alpha}(v)$ it is enough
to only consider those $\lambda=(\lambda_1,\ldots,\lambda_r)\in {\mathbf R}_{\geq 0}^r$ with the property that the $v_i$ with $i\in J(\lambda):=\{i\mid\lambda_i\neq 0\}$ are linearly independent. 

In order to see this, it is enough to show that if the $v_i$ with $i\in J(\lambda)$ are
linearly dependent, then we can find $\lambda'=(\lambda'_1,\ldots,\lambda'_r)\in {\mathbf R}_{\geq 0}^r$ such that $\sum_{i=1}^r\lambda_iv_i=\sum_{i=1}^r\lambda'_iv_i$
and we have $\sum_{i=1}^r\lambda'_i\alpha_i\leq \sum_{i=1}^r\lambda_i\alpha_i$ and $J(\lambda')\subsetneq J(\lambda)$. Note that, by assumption, we have a relation
$\sum_{i\in J(\lambda)}b_i\lambda_i=0$ such that $J:=\{i\in J(\lambda)\mid b_i\neq 0\}$ is nonempty. After possibly multiplying this relation with $-1$, we may and will assume
that $\sum_{i\in J}b_i\alpha_i\geq 0$ and $b_i>0$ for some $i\in J$ (we use here the fact that $\alpha_i\geq 0$ for all $i$). Let $j\in J$ be such that
$$\tfrac{\lambda_j}{b_j}=\min\left\{\tfrac{\lambda_i}{b_i}\mid i\in J, b_i>0\right\}.$$
In this case, it is straightforward to see that if $\lambda'_i=\lambda_i-\tfrac{\lambda_j}{b_j}b_i$ for all $i\in J$ and $\lambda'_i=0$ for $i\not\in J$,
then $\lambda'\in {\mathbf R}_{\geq 0}^r$ and we have $J(\lambda')\subseteq J(\lambda)\smallsetminus\{j\}$ and 
$$v=\sum_{i=1}^r\lambda'_iv_i\quad\text{and}\quad \sum_{i=1}^r\lambda'_i\alpha_i\leq \sum_{i=1}^r\lambda_i\alpha_i.$$
This proves the claim.

Let $\Lambda$ be the set of those $J\subseteq \{1,\ldots,r\}$ such that the $v_i$ with $i\in J$ are linearly independent. For every $J\in\Lambda$,
let $\sigma_J$ be the convex cone in $N_{\mathbf R}$
generated by the $v_i$ with $i\in J$. 
It is a consequence of
Carath\'{e}odory's theorem that $C=\bigcup_{J\in\Lambda}\sigma_J$. Consider a fan $\Delta$ with support $C$ such that every cone $\sigma_J$, for $J\in\Lambda$, is a union
of cones in $\Delta$. We now show that for every $\alpha\in {\mathbf R}^r_{\geq 0}$ and every $\tau\in\Delta$, the restriction $\widetilde{\varphi}_{\alpha}\vert_{\tau}$ is a linear function. 
Note first that if $J\in\Lambda$ and for some $v\in \sigma_J$ we write $v=\sum_{i\in J}\lambda_iv_i$, then each $\lambda_i$ is given by a linear function of $v$; therefore
$\sum_{i\in J}\lambda_i\alpha_i$ is given by a linear function $\ell_J$ of $v$. We next note that if $w$ lies in the relative interior ${\rm Relint}(\tau)$ of $\tau$
and $J\in\Lambda$, then $w\in\sigma_J$ if and only if $\tau\subseteq\sigma_J$. Indeed, by construction of $\Delta$, we have $\sigma_1,\ldots,\sigma_d\in \Delta$ such that
$\sigma_J=\bigcup_{j=1}^d\sigma_j$, hence $\sigma_J\cap\tau=\bigcup_{j=1}^d(\sigma_j\cap \tau)$. Since $\Delta$ is a fan, each $\sigma_j\cap \tau$ is a face 
of $\tau$, so the union contains a point in ${\rm Relint}(\tau)$ if and only if $\tau\subseteq\sigma_j$ for some $j$, in which case $\tau\subseteq\sigma_J$. 
Our claim thus implies that 
\begin{equation}\label{eq_relint}
\widetilde{\varphi}_{\alpha}(v)=\min\big\{\ell_J(v)\mid \tau\subseteq \sigma_J\big\}\quad\text{for all}\quad v\in {\rm Relint}(\tau).
\end{equation}
It is well-known (and easy to see) that (\ref{eq_relint}) implies that $-\widetilde{\varphi}_{\alpha}$ is convex on ${\rm Relint}(\tau)$. Since $\widetilde{\varphi}_{\alpha}$ is a convex function on $C$
(the easy argument for this was given in the proof of Lemma~\ref{main_lemma}), it follows that we have a linear function $\ell$ on $N_{\mathbf R}$ such that
$\widetilde{\varphi}_{\alpha}=\ell$ on ${\rm Relint}(\tau)$. Given any $v\in\tau$, if $v'\in {\rm Relint}(\tau)$, then $v+v'\in {\rm Relint}(\tau)$, and the convexity of $\widetilde{\varphi}_{\alpha}$
implies that
$$\ell(v+v')=\widetilde{\varphi}_{\alpha}(v+v')\leq\widetilde{\varphi}_{\alpha}(v)+\widetilde{\varphi}_{\alpha}(v')=\widetilde{\varphi}_{\alpha}(v)+\ell(v').$$
Therefore we have $\ell\leq\widetilde{\varphi}_{\alpha}$ on $\tau$. On the other hand, it is an immediate consequence of the definition of $\widetilde{\varphi}_{\alpha}$ that if 
$(w_m)_{m\geq 1}$ is a sequence of vectors in $C$ with $\lim_{m\to\infty}w_m=v$, then
$\widetilde{\varphi}_{\alpha}(v)\leq \liminf_{m\to\infty}\widetilde{\varphi}_{\alpha}(w_m)$. By taking $w_m\in {\rm Relint}(\tau)$, we see that $\widetilde{\varphi}_{\alpha}\leq \ell$ on $\tau$.
We thus conclude that $\widetilde{\varphi}_{\alpha}\vert_{\tau}$ is a linear function.
\end{rmk}

We can now give the proof of the result from \cite{ELMNP}.

\begin{proof}[Proof of Proposition~\ref{main_result}]
Recall first that every fan admits a smooth refinement (which has the same support), see \cite[Theorem~8.5]{Ewald}. Furthermore, it is clear that if $\Delta$ is a smooth
fan whose cones satisfy (\ref{eq_main_result}), then any smooth refinement of $\Delta$ satisfies the same property. 

Let 
$$T=\{m\in S\mid \fra_m\neq 0\}\quad\text{and}\quad S_+=\{m\in S\mid \ell m\in T\,\,\text{for some}\,\,\ell\in {\mathbf Z}_{>0}\}.$$
Since $R(\fra_{\bullet})$ is finitely generated, we can choose $m_1,\ldots,m_r\in S$ such that over suitable subsets in a finite affine open cover
of $X$,
$R(\fra_{\bullet})$ is generated over $\cO_X$ by elements in degrees in $\{m_1,\ldots,m_r\}$. We may and will assume that $m_i\in T$ for all $i$,
so $T$ is generated by $m_1,\ldots,m_r$. Therefore the saturation $S_+$ of $T$ is finitely generated. Note that if $\Delta_0$ satisfies the condition
in the proposition for $(\fra_m)_{m\in S_+}$, then we may take $\Delta$ to be any smooth fan with support $C$ with the property that every cone of $\Delta_0$ is a union of cones 
of $\Delta$. Indeed, the condition (\ref{eq_main_result}) holds trivially on the cones not contained in the support of $\Delta_0$. We thus may and will assume that
$S=S_+$. 

Since $R(\fra_{\bullet})$ is finitely generated, for every $m\in S$, the $\cO_X$-algebra $\bigoplus_{\ell\geq 0}\fra_{\ell m}$ is finitely generated
(see \cite[Lemma~4.8]{ELMNP}). In this case it follows from \cite[Chap. III, Section 1, Proposition 3]{Bourbaki} that there is a positive integer $d$ such that
$\fra_{d\ell m}=\fra_{dm}^{\ell}$ for all $\ell\geq 1$. We denote the smallest such $d$ by $d_m$. 

In what follows, it is convenient to use the formalism of asymptotic multiplicities, as in \cite{ELMNP}. If $v$ is a discrete valuation of the function field of $X$,
having center on $X$, and if $m\in S$, then we put
$$v^{\fra_{\bullet}}(m):=\inf_{\ell}\frac{v(\fra_{\ell m})}{\ell}=\lim_{\ell\to\infty}\frac{v(\fra_{\ell m})}{\ell},$$
where both the infimum and the limit are over those $\ell$ such that $\fra_{\ell m}\neq 0$. Note that by definition of $d_m$, 
we have $v^{\fra_{\bullet}}(m)=\tfrac{v(\fra_{\ell d_mm})}{\ell d_m}$ for every $m\in S$ and every $\ell\in {\mathbf Z}_{>0}$.

Our choice of $m_1,\ldots,m_r$ implies that for every 
$m\in S$, we have
\begin{equation}\label{eq_sum}
\fra_m=\sum_{\ell_1,\ldots,\ell_r}\fra_{m_1}^{\ell_1}\cdots\fra_{m_r}^{\ell_r},
\end{equation}
where the sum is over all $\ell_1,\ldots,\ell_r\in {\mathbf Z}_{\geq 0}$ with $m=\sum_{i=1}^r\ell_im_i$.
We now show that for every $m\in S$, we have
\begin{equation}\label{eq_claim}
v^{\fra_{\bullet}}(m)=\inf\left\{\sum_{i=1}^r\lambda_i\cdot v(\fra_{m_i})\mid \lambda_1,\ldots,\lambda_r\in {\mathbf Q}_{\geq 0}, m=\sum_{i=1}^r\lambda_im_i\right\}.
\end{equation}
In order to prove ``$\leq$", note that given  $\lambda_1,\ldots,\lambda_r\in {\mathbf Q}_{\geq 0}$ with  $m=\sum_{i=1}^r\lambda_im_i$,
we may choose $\ell\in {\mathbf Z}_{>0}$ such that $\ell\lambda_i\in {\mathbf Z}$ for all $i$. In this case the inclusion
$\prod_i\fra_{m_i}^{\ell\lambda_i}\subseteq\fra_{\ell m}$ implies
$$v(\fra_{\ell m})\leq \sum_{i=1}^r \ell\lambda_i\cdot v(\fra_{m_i})$$
and thus 
$$v^{\fra_{\bullet}}(m)\leq \frac{v(\fra_{\ell m})}{\ell}\leq \sum_{i=1}^r \lambda_i\cdot v(\fra_{m_i}).$$
This gives the inequality ``$\leq$" in 
 (\ref{eq_claim}). 
In order to prove the opposite inequality, note that if $\ell\in {\mathbf Z}_{>0}$ is such that $\fra_{\ell m}\neq 0$, then it follows from (\ref{eq_sum}) that there are
$\ell_1,\ldots,\ell_r\in {\mathbf Z}_{\geq 0}$ such that $\sum_i\ell_im_i=\ell m$ and 
$$v(\fra_{\ell m})\geq \sum_{i=1}^r\ell_i\cdot v(\fra_{m_i}).$$
Dividing by $\ell$ and then letting $\ell$ vary, we obtain the inequality ``$\geq$" in (\ref{eq_claim}). 

It follows from (\ref{eq_claim}) that we may apply Lemma~\ref{main_lemma} to obtain a fan $\Delta$ (that we may assume to be smooth), with support $C$,
such that for every valuation $v$ as above, we have 
$v^{\fra_{\bullet}}(m+m')=v^{\fra_{\bullet}}(m)+v^{\fra_{\bullet}}(m')$ whenever $m,m'\in S$ lie in the same cone of $\Delta$. 
Let $d$ be the least common multiple of the $d_w$, when $w$ runs over the primitive ray generators of $\Delta$. In this case, if $\sigma$ is a cone in 
$\Delta$ with primitive ray generators $e_1,\ldots,e_s$, then for every $p_1,\ldots,p_s\in {\mathbf Z}_{\geq 0}$, if $m=\sum_{i=1}^sp_ie_i$, then 
\begin{equation}\label{eq_final}
v(\fra_{dm})\leq \sum_{i=1}^sp_i\cdot v(\fra_{de_i})=\sum_{i=1}^sp_i\cdot v^{\fra_{\bullet}}(de_i)=v^{\fra_{\bullet}}(dm)\leq v(\fra_{dm}),
\end{equation}
where the first inequality follows from the inclusion $\prod_i\fra_{de_i}^{p_i}\subseteq\fra_m$. Therefore all inequalities in (\ref{eq_final}) are equalities.
Since 
$$v(\fra_{dm})=v\big(\fra_{de_1}^{p_1}\cdots \fra_{de_s}^{p_s}\big)$$
for every discrete valuation $v$ of the function field of $X$ that has center on $X$, it follows from \cite[Proposition~6.8.2]{HS} that
$$\overline{\fra_{dm}}=\overline{\fra_{de_1}^{p_1}\cdots \fra_{de_s}^{p_s}}.$$
This completes the proof.
\end{proof}

\subsection*{Acknowledgments}
We would like to thank Vlad Lazi\'{c} for
motivating us to write
this erratum and for several comments on preliminary versions.
We are also indebted to Sasha Barvinok for his comments in connection with 
Lemma~\ref{main_lemma}, 
especially for the suggestion to use the Duality Theorem in Linear Programming in the proof.

\section*{References}
\begin{biblist}

\bib{Barvinok}{book}{
   author={Barvinok, A.},
   title={A course in convexity},
   series={Graduate Studies in Mathematics},
   volume={54},
   publisher={American Mathematical Society, Providence, RI},
   date={2002},
}

\bib{Bourbaki}{book}{
   author={Bourbaki, N.},
   title={\'{E}l\'{e}ments de math\'{e}matique. Fascicule XXVIII. Algebre
commutative. Chapitre 3: Graduations, filtrations et topologies. Chapitre
   4:
 Id\'{e}aux premiers associ\'{e}s et d\'{e}composition primaire},
   series={},
   publisher={Hermann, Paris, },
   date={1961},
}

\bib{ELMNP}{article}{
   author={Ein, L.},
   author={Lazarsfeld, R.},
   author={Musta\c{t}\u{a}, M.},
   author={Nakamaye, M.},
   author={Popa, M.},
   title={Asymptotic invariants of base loci},
   journal={Ann. Inst. Fourier (Grenoble)},
   volume={56},
   date={2006},
   number={6},
   pages={1701--1734},
}

\bib{Ewald}{book}{
 author={Ewald, G.},
  title={Combinatorial convexity and algebraic geometry},
  series={Graduate Texts in Mathematics},
   volume={168},
   publisher={Springer-Verlag, New York},
   date={1996},
}

\bib{HS}{book}{
   author={Huneke, C.},
   author={Swanson, I.},
   title={Integral closure of ideals, rings, and modules},
   series={London Mathematical Society Lecture Note Series},
   volume={336},
   publisher={Cambridge University Press, Cambridge},
   date={2006},
}

\bib{Ziegler}{book}{
   author={Ziegler, G.~M.},
   title={Lectures on polytopes},
   series={Graduate Texts in Mathematics},
   volume={152},
   publisher={Springer-Verlag, New York},
   date={1995},
}

\end{biblist}

\end{document}